\newtheorem{propo}{{\bf Proposition}}[section]
\newtheorem{coro}[propo]{{\bf Corollary}}
\newtheorem{lemma}[propo]{{\bf Lemma}} \newtheorem{theor}[propo]{{\bf
Theorem}} \newtheorem{ex}{{\sc Example}}[section]
\newtheorem{definition}{\bf Definition}
\newenvironment{proof}{{\bf Proof.}}{$\Box$}
\def\N{{\mathbb N}}
\begin{document}
\vspace*{1.0in}

\begin{center} Leibniz algebras in which all centralisers of nonzero elements are zero algebras 

\end{center}
\bigskip

\centerline {David A. Towers} \centerline {Department of
Mathematics, Lancaster University} \centerline {Lancaster LA1 4YF,
England}
\bigskip

\begin{abstract} This paper is concerned with generalising the results for Lie $CT$-algebras to Leibniz algebras. In some cases our results give a generalisation even for the case of a Lie algebra. Results on $A$-algebras are used to show every Leibniz $CT$-algebra over an algebraically closed field of characteristic different from 2,3 is solvable or is isomorphic to $sl_2(F)$. A characterisation is then given for solvable Leibniz $CT$-algebras. It is also shown that the class of solvable Leibniz $CT$-algebras is factor closed. 
\end{abstract}

\noindent {\it Mathematics Subject Classification 2020:} 17A32, 17B05, 17B20, 17B30, 17B50. \\
\noindent {\it Key Words and Phrases:} Leibniz algebra, centraliser, $A$-algebra, $CT$-algebra, solvable, completely solvable.

\section{Introduction}
An algebra $L$ over a field $F$ is called a {\it Leibniz algebra} if, for every $x,y,z \in L$, we have
\[  [x,[y,z]]=[[x,y],z]-[[x,z],y].
\]
In other words, the right multiplication operator $R_x : L \rightarrow L : y\mapsto [y,x]$ is a derivation of $L$. As a result such algebras are sometimes called {\it right} Leibniz algebras, and there is a corresponding notion of {\it left} Leibniz algebras, which satisfy
\[  [x,[y,z]]=[[x,y],z]+[y,[x,z]].
\]
Clearly, the opposite of a right (left) Leibniz algebra is a left (right) Leibniz algebra, so, in most situations, it does not matter which definition we use. 
\par
 
Every Lie algebra is a Leibniz algebra and every Leibniz algebra satisfying $[x,x]=0$ for every element is a Lie algebra. They were introduced in 1965 by Bloh (\cite{bloh}) who called them $D$-algebras, though they attracted more widespread interest, and acquired their current name, through work by Loday and Pirashvili ({\cite{loday1}, \cite{loday2}). They have natural connections to a variety of areas, including algebraic $K$-theory, classical algebraic topology, differential geometry, homological algebra, loop spaces, noncommutative geometry and physics. A number of structural results have been obtained as analogues of corresponding results in Lie algebras.
\par

The {\it Leibniz kernel} is the set $I=$ span$\{x^2:x\in L\}$. Then $I$ is the smallest ideal of $L$ such that $L/I$ is a Lie algebra.
Also $[L,I]=0$.
\par

We define the following series:
\[ L^1=L,\;\; L^{k+1}=[L^k,L] \;\;\; (k\geq 1)\]
and 
\[L^{(0)}=L, \;\; L^{(k+1)}=[L^{(k)},L^{(k)}]\;\;\; (k\geq 0).
\]
Then $L$ is {\em nilpotent of class n} (resp. {\em solvable of derived length n}) if $L^{n+1}=0$ but $L^n\neq 0$ (resp. $ L^{(n)}=0$ but $L^{(n-1)}\neq 0$) for some $n \in \N$. It is straightforward to check that $L$ is nilpotent of class $n$ precisely when every product of $n+1$ elements of $L$ is zero, but some product of $n$ elements is non-zero. 
\par

The {\it centraliser} of an element $x$ of an algebra $A$ is $C_A(x)=\{a\in A \mid ax=xa=0\}$; if the algebra $A$ is clear we will simply write $C(x)$. An algebra $A$ is called {\it centraliser transitive, or a $CT$-algebra} if $x\in C(y)$ and $y\in C(z)$ imply that $x\in C(z)$. Such algebras, where $A$ is a Lie algebra, have been studied by Klep and Moravec (\cite{km}) in 2010, by Arzhantsev, Makedonskii, and Petravchuk (\cite{km}) in 2011, and by Gorbatsevic (\cite{gorb}) in 2016. In the last two of these references they are shown to have applications to the classification of finite-dimensional subalgebras in polynomial Lie algebras of rank one and to Lie algebras of vector fields whose orbits are one-dimensional. A similar notion for groups was defined and studied by Weisner {\cite{weis}) in 1925.  Finite nonabelian simple $CT$-groups had been classified by Suzuki (\cite{suz})  in 1957. He proved that every finite nonabelian simple $CT$-group is isomorphic to some $PSL(2,2^f )$, where $f>1$. Suzuki's result is considered to have been one of the keystones in the proof of the Odd Order Theorem by Feit and Thompson ({\cite{ft}).
\par

Note that $x\in C(y)$ if and only if $y\in C(x)$. It is clear that $CT$-algebras are subalgebra closed. The {\it centre} of an algebra $A$ is $Z(A)=\{z\in A \mid za=az=0 \hbox{ for all } a\in A\}$. We call $A$ a {\it zero algebra} if $A=Z(A)$. The following lemma makes clear the title of the paper.

\begin{lemma}\label{1} The algebra $A$ is a $CT$-algebra if and only if $C(x)$ is a zero algebra for all $0\neq x\in L$.
\end{lemma}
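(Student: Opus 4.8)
The plan is to prove both implications directly from the definitions, the only real ingredient being the symmetry of the centralising relation noted just above the statement, namely that $x\in C(y)$ if and only if $y\in C(x)$. I read the condition ``$C(x)$ is a zero algebra'' as the assertion that every product of two elements of $C(x)$ vanishes; this automatically makes $C(x)$ closed under multiplication (every such product equals $0$), so there is no separate subalgebra check to perform. Throughout I keep in mind that the $CT$ condition is to be applied only to nonzero elements, in line with the hypothesis $0\neq x$ in the statement; this matters because $C(0)=A$, and allowing $0$ as the middle term would collapse transitivity entirely.

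For the forward implication, I would assume $A$ is a $CT$-algebra and fix $0\neq x$. Given $a,b\in C(x)$, I may assume both are nonzero, since otherwise the products $ab$ and $ba$ vanish trivially. From $a\in C(x)$ and symmetry I get $x\in C(a)$, while $b\in C(x)$ holds by hypothesis. Applying centraliser transitivity with $x$ as the middle term (here $x\neq 0$) yields $b\in C(a)$, that is $ab=ba=0$. Hence every product in $C(x)$ is zero and $C(x)$ is a zero algebra.

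For the converse, I would assume $C(x)$ is a zero algebra for each nonzero $x$, and suppose $x\in C(y)$ and $y\in C(z)$ for nonzero $x,y,z$. By symmetry $y\in C(z)$ gives $z\in C(y)$, so both $x$ and $z$ lie in $C(y)$. Since $y\neq 0$, the algebra $C(y)$ has trivial multiplication, whence $xz=zx=0$, i.e. $x\in C(z)$. This is exactly the transitivity required, so $A$ is a $CT$-algebra.

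I expect no genuine obstacle here: the argument is a short chase through the definitions. The one point demanding care is the bookkeeping around the zero element and the reading of ``zero algebra'', as flagged above; once the symmetry $x\in C(y)\Leftrightarrow y\in C(x)$ is in hand, both directions are immediate.
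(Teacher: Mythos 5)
Your proof is correct and follows essentially the same route as the paper's: both directions are a direct definition chase using the symmetry $x\in C(y)\Leftrightarrow y\in C(x)$, with $x$ (resp.\ $y$) as the middle term for transitivity. Your extra care about the zero element only makes explicit what the paper's proof leaves implicit.
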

\begin{proof} Let $A$ be a $CT$-algebra and let $y,z\in C(x)$. Then $y\in C(x)$ and $x\in C(z)$, so $y\in C(z)$; that is $yz=zy=0$.
\par

Conversely, suppose that $C(x)$ is a zero algebra for all $x\neq 0$, and let $x\in C(y)$ and $y\in C(z)$. Then $x,z\in C(y)$, so $xz=zx=0$, whence $x\in C(z)$.
\end{proof}

\begin{lemma}\label{z} If $A$ is a $CT$-algebra and $Z(A)\neq 0$, then $A$ is a zero algebra.
\end{lemma}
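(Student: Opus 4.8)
The plan is to exploit Lemma~\ref{1}, which reduces the $CT$ condition to the statement that every centraliser of a nonzero element is a zero algebra. The key observation I would make is that a nonzero central element has the whole of $A$ as its centraliser, so applying Lemma~\ref{1} to such an element forces $A$ itself to be a zero algebra.

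Concretely, I would begin by picking a nonzero element $z\in Z(A)$, which exists by hypothesis. Unwinding the definition of the centre, $z\in Z(A)$ means $za=az=0$ for every $a\in A$. I would then read this symmetrically: for each $a\in A$ the conditions $az=za=0$ say precisely that $a\in C(z)$. Hence $A\subseteq C(z)$, and since trivially $C(z)\subseteq A$, we obtain $C(z)=A$.

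With this identification in hand, the conclusion is immediate. Since $A$ is a $CT$-algebra and $z\neq 0$, Lemma~\ref{1} guarantees that $C(z)$ is a zero algebra; but $C(z)=A$, so $A$ is a zero algebra, as required. I do not anticipate a genuine obstacle here: the content of the lemma lies entirely in the translation between ``$z$ is central'' and ``$C(z)=A$,'' after which the $CT$ hypothesis does the work. The only point requiring a modicum of care is verifying that the defining relations of $Z(A)$ and of $C(z)$ match up exactly (both demand vanishing of products on the left and on the right), so that the inclusion $A\subseteq C(z)$ is genuinely valid in the non-commutative Leibniz setting rather than merely in the Lie case.
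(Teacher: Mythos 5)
Your proof is correct and follows exactly the paper's argument: pick a nonzero central element $z$, observe that $C(z)=A$, and apply Lemma~\ref{1} to conclude that $A$ is a zero algebra. The paper's own proof is a one-line compression of precisely these steps.
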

\begin{proof} Let $0\neq x\in Z(A)$. Then $A=C(x)$ is a zero algebra.
\end{proof}
\medskip

We call an algebra $A$ an {\it $A$-algebra} if every nilpotent subalgebra of $A$ is a zero algebra. Then $CT$-algebras are a subclass of the class of $A$-algebras. 

\begin{lemma}\label{A} Every $CT$-algebra is an $A$-algebra.
\end{lemma}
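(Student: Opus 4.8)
The plan is to reduce the statement to the two facts already available: that $CT$-algebras are subalgebra closed, and that a $CT$-algebra with nonzero centre is a zero algebra (Lemma \ref{z}). So I would let $N$ be a nilpotent subalgebra of the $CT$-algebra $A$; if $N=0$ it is trivially a zero algebra, so I assume $N\neq 0$. Since the $CT$ property is inherited by subalgebras, $N$ is itself a $CT$-algebra. It therefore suffices to show that a nonzero nilpotent $CT$-algebra is a zero algebra, and by Lemma \ref{z} this follows once I exhibit a nonzero element of $Z(N)$.

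To produce such an element, I would use the characterisation of nilpotency recorded in the introduction: if $N$ has nilpotency class $n$, then every product of $n+1$ elements of $N$ vanishes, while some product $z$ of $n$ elements is nonzero. Fixing such a $z$, I observe that for every $a\in N$ both $[z,a]$ and $[a,z]$ are products of $n+1$ elements of $N$, hence are zero. Thus $z\in Z(N)$, and in particular $Z(N)\neq 0$.

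With $Z(N)\neq 0$ established and $N$ a $CT$-algebra, Lemma \ref{z} applied to $N$ gives that $N$ is a zero algebra, which is exactly what the definition of $A$-algebra requires.

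The step needing the most care is the production of the central element, and this is where the Leibniz setting differs from the Lie case. For a Lie algebra one would simply take the last nonzero term of the lower central series, but in a Leibniz algebra the two sides of a product need not agree, so it is essential that the stated product characterisation governs elements occupying both the left and the right slot. This is precisely what yields $[a,z]=0$ in addition to $[z,a]=0$; once both vanish for all $a$, membership of $z$ in $Z(N)$ is immediate, and the reduction via subalgebra-closure together with Lemma \ref{z} completes the proof.
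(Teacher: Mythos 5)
Your proof is correct and follows the same route as the paper: restrict to a nonzero nilpotent subalgebra $N$, use subalgebra-closure to see $N$ is a $CT$-algebra, show $Z(N)\neq 0$, and apply Lemma \ref{z}. The only difference is that you spell out why a nonzero nilpotent Leibniz algebra has nonzero centre (via the product characterisation of nilpotency class), a fact the paper simply asserts.
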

\begin{proof} Let $N$ be a nilpotent $CT$-algebra. Then $Z(N)\neq 0$, so the result follows from Lemma \ref{z}..
\end{proof}
\medskip

The next section is concerned with generalising the results for Lie $CT$-algebras to Leibniz algebras. In some cases our results give a generalisation even for the case of a Lie algebra. Leibniz $A$-algebras were studied by Towers in \cite{tow}. The results proved there are used to show every Leibniz $CT$-algebra over an algebraically closed field of characteristic different from 2,3 is solvable or is isomorphic to $sl_2(F)$; the previously recorded result for Lie algebras asumed characteristic zero. A characterisation is then given for solvable Leibniz $CT$-algebras. It is also shown that the class of solvable Leibniz $CT$-algebras is factor closed. 
\par

From now on, $L$ will denote a finite-dimensional Leibniz algebra over a general field $F$ (unless specified otherwise). The notation $\oplus$ will denote an algebra direct sum, whereas $\dot{+}$ will denote a direct sum of the underlying vector space structure alone.

\section{Main results}
The following lemma will prove useful.

\begin{lemma}\label{s} Let $L=A\dot{+}Fx$ be a Leibniz $CT$-algebra over a field $F$, where $A$ is a zero ideal of $L$ and $x^2\in A$. Then, either $L$ is a zero algebra or $R_x|_A$ has no zero eigenvalue in $F$ and $Ax=A$. 
\end{lemma}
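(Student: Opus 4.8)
The plan is to argue by contradiction, first disposing of the degenerate alternative. If $L$ is a zero algebra we are in the first case and there is nothing to prove, so assume $L$ is not a zero algebra. Since $L$ is a $CT$-algebra, Lemma \ref{z} then forces $Z(L)=0$, and I will exploit this repeatedly. Next I would reduce the second alternative to a single claim: because $A$ is an ideal, $R_x$ maps $A$ into $A$, so $R_x|_A$ is an endomorphism of the finite-dimensional space $A$. Having no zero eigenvalue in $F$ is exactly $\ker(R_x|_A)=0$, i.e. injectivity, which for a finite-dimensional space is equivalent to surjectivity, and surjectivity is precisely $Ax=A$. Thus both assertions of the second alternative follow from the one statement $\ker(R_x|_A)=0$.

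To establish $\ker(R_x|_A)=0$, suppose otherwise and choose $0\neq a\in A$ with $[a,x]=0$; the goal is to derive the contradiction $a\in Z(L)=0$. Since $A$ is a zero ideal, $a$ already centralises all of $A$, and $[a,x]=0$ by choice, so the only missing ingredient is $[x,a]=0$. Set $b=[x,a]\in A$. Using that $R_x$ is a derivation I would compute $[b,x]=R_x([x,a])=[[x,x],a]+[x,[a,x]]=[x^2,a]+[x,0]$, which vanishes because $x^2,a\in A$ and $A$ is a zero algebra; the Leibniz identity then gives $[x,b]=[x,[x,a]]=[x^2,a]-[[x,a],x]=0$ as well. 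Hence $b$ is centralised by $A$ and by $x$, so $C(b)=A\dot{+}Fx=L$ and $b\in Z(L)=0$. Therefore $[x,a]=b=0$, which combined with $[a,x]=0$ and $A\subseteq C(a)$ forces $C(a)=L$, i.e. $a\in Z(L)=0$, contradicting $a\neq0$.

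The main obstacle is precisely the step $[x,a]=0$: in a non-Lie Leibniz algebra, $[a,x]=0$ carries no information about $[x,a]$, so one must use the hypotheses $x^2\in A$ and ``$A$ a zero ideal'' exactly where they enter the derivation and Leibniz expansions, and then recycle the resulting element $b=[x,a]$ through the centraliser machinery (via $Z(L)=0$, which came from Lemma \ref{z}) to annihilate it. Once $\ker(R_x|_A)=0$ is in hand, the conclusion $Ax=A$ is the rank--nullity observation already noted, so no further computation is required.
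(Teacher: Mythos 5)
Your proposal is correct and follows essentially the same route as the paper: the key computation showing that $b=[x,a]$ satisfies $[b,x]=[x,b]=0$ (using $x^2\in A$ and $A$ a zero algebra), and hence lies in the centre, is exactly the paper's argument, merely rearranged as a contradiction via $Z(L)=0$ rather than as a direct case split on whether $xy$ vanishes. The one genuine (harmless) simplification is at the end: where the paper invokes the Fitting decomposition of $L$ relative to $R_x$ to deduce $Ax=A$, you observe directly that ``no zero eigenvalue in $F$'' means $\ker(R_x|_A)=0$, so $R_x|_A$ is surjective by finite-dimensionality.
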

\begin{proof} Suppose that $R_x|_A$ has a zero eigenvalue in $F$.  Let $0\neq y\in A$ be a corresponding eigenvector, so $yx=0$. If $xy=0$, we have that $L=C(y)$ is a zero algebra. So suppose that $xy\neq 0$. Then 
\[ (xy)x=x(yx)+x^2y=0 \hbox{ and } x(xy)=x^2y-(xy)x=0.
\] Thus, $L=C(xy)$ is a zero algebra again. 
\par

Suppose next that $R_x|_A$ has no zero eigenvalue in $F$. Let $L=L_0\dot{+}L_1$ be the Fitting decomposition of $L$ relative to $R_x$. Then $L_1\subseteq A$. Suppose that $B=L_0\cap A\neq 0$ and that $R_x^n(B)=0$, $R_x^{n-1}(B)\neq 0$. Let $0\neq y\in R_x^{n-1}(B)$. Then $yx=0$ and $R_x|_A$ has a zero eigenvalue in $F$, a contradiction. It follows that $B=0$ and so $A=L_1$ and $Ax=A$. 
\end{proof}
\medskip

{\it Cyclic} Leibniz algebras, $L$, are generated by a single element. In this case $L$ has a basis $a,a^2, \ldots, a^n (n > 1)$ and product $a^na=\alpha_2a^2+ \ldots + \alpha_na^n$.  Let $T$ be the matrix for $R_a$ with respect to the above basis. Then $T$ is the
companion matrix for $p(x) =  x^n - \alpha_n x^{n-1} - \ldots - \alpha_2 x= p_1(x)^{n_1} \ldots p_r(x)^{n_r}$, where the
$p_j$ are the distinct irreducible factors of $p(x)$. Then we have the following result.

\begin{theor} Let $L$ be a cyclic Leibniz algebra. Then the following are equivalent:
\begin{itemize}
\item[(i)] $L$ is a $CT$-algebra;
\item[(ii)] $L$ is an $A$-algebra; and
\item[(iii)] $\alpha_2 \neq 0$, and then $L=L^2\dot{+} F(a^n-\alpha_n a^{n-1} - \dots - \alpha_2 a)$ and we can take $p_1(x)^{n_1}=x$. 
\end{itemize}
\end{theor}
\begin{proof} The equivalence of (ii) and (iii) is given by \cite[Theorem 12]{tow}. Lemma \ref{A} gives that (i) implies (ii), so it simply remains to show that (iii) implies (i).
\par

So suppose that $L$ is as described in (iii). Put $b=a^n-\alpha_n a^{n-1} - \dots - \alpha_2 a$. Then it is easy to check that $b^2=0$ and $R_b|_{L^2}$ has no zero eigenvalue in $F$. Let $x=n+\lambda b$ where $n\in L^2$ and $\lambda\in F$. Now $bL^2=bI=0$, so straightforward calculations show that
\[   C(x)=
\left\{ \begin{array}{lll}
L^2 & \hbox{if} & \lambda=0, n\neq 0 \\ 
Fb & \hbox{if} &  \lambda\neq 0, n=0 \\
0 &  \hbox{if} & \lambda\neq 0, n\neq 0 
\end{array} \right\}
\] Hence $L$ is a $CT$-algebra 
\end{proof}
\medskip

{\bf Note} that the above result shows that we may not have $xA=A$ in Lemma \ref{s}. For, if $L$ is a cyclic Leibniz $A$-algebra, then $Ix=I$, but $xI=0$.

\begin{lemma} If $L$ is a Leibniz algebra and $N$ is a zero ideal of $L$, we can consider $N$ as a right $L/N$-module under the action $n(x+N)=nx$ for all $x\in L$, $n\in N$. Then, each element $R_{x+N}$ with $x\notin N$ has no zero eigenvalue under this action if and only if $C_L(n)\subseteq N$ for all $n\in N$. 
\end{lemma}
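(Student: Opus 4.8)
The plan is to prove the two implications separately, after a few reductions. Note first that the action is well defined exactly because $N$ is a zero ideal, that $R_{x+N}$ is just the restriction $R_x|_N$ (which sends $N$ into $N$ since $N$ is an ideal), and that $N\subseteq C_L(n)$ for every $n\in N$; thus ``$C_L(n)\subseteq N$'' means $C_L(n)=N$, and since $C_L(0)=L$ one reads the statement as concerning nonzero $n$, as everywhere else in the paper. Two consequences of $[L,I]=0$ will be used repeatedly: for $i\in I$ and $m\in N$ one has $[m,i]=0$, so $R_i|_N=0$ and replacing $x$ by $x+i$ alters neither $R_{x+N}$ nor the relation $nx=0$; and $xm+mx=(x+m)^2-x^2-m^2\in I$, while $[x,j]=0$ for every $j\in I$. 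The forward implication is the easy one and I would prove its contrapositive: if some nonzero $m\in N$ has an element $a\in C_L(m)\setminus N$, then $ma=0$ exhibits $m$ as a nonzero vector of $\ker R_{a+N}$ with $a+N\neq 0$, so $R_{a+N}$ has $0$ as an eigenvalue.

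For the converse I would again argue contrapositively: assume some $R_{x+N}$ with $x\notin N$ has a zero eigenvalue, say $nx=0$ with $0\neq n\in N$, and produce a nonzero $m\in N$ and an element of $C_L(m)$ outside $N$. Because $nx=0$, every $a=x+i$ with $i\in I$ already satisfies $na=0$; the whole difficulty is to arrange $[a,n]=0$ too, that is, to control the \emph{left} multiplication. The key computational input is the identity $[y_k,n]=(R_x|_N)^k(xn)$, where $y_0=x$ and $y_{k+1}=y_kx$; it follows by induction from the fact that $R_x$ is a derivation together with $nx=0$, and here $y_k\in I$ for $k\geq 1$. Moreover $xn\in I\cap N$ (since $xn+nx\in I$ and $nx=0$), and $I\cap N$ is $R_x|_N$-invariant and killed on the left by $x$.

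If $xn=0$ then $x\in C_L(n)$ and we are done. If $xn\neq 0$, I would look at the cyclic subspace $C=F[R_x|_N]\,(xn)\subseteq I\cap N$ and split into two cases. If $R_x|_N$ is not invertible on $C$, there is a nonzero $m\in C\subseteq I\cap N$ with $mx=0$, and then also $xm=0$, so $x\in C_L(m)$. If $R_x|_N$ is invertible on $C$, then $xn$ lies in the span of the vectors $(R_x|_N)^k(xn)=[y_k,n]$ with $k\geq 1$, so $xn=[i,n]$ for some $i\in I$; then $a=x-i$ satisfies $[a,n]=xn-[i,n]=0$ and $[n,a]=nx-[n,i]=0$, whence $a\in C_L(n)$.

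The main obstacle, and the step I would treat most carefully, is precisely this reconciliation of the right and left actions when $xn\neq 0$: the hypothesis constrains only $R_x|_N$, whereas lying in a centraliser is a two-sided condition, and the cyclic example preceding this lemma (where $xI=0$ but $Ix=I$) shows that $nx=0$ need not force $xn=0$. The two facts extracted from $[L,I]=0$ are exactly what make the sides compatible, by pushing $xn$ into $I\cap N$ and by letting one correct $x$ by an element of $I$. A last bit of bookkeeping is to ensure the centralising element lies outside $N$: in the invertible case $a=x-i$ could a priori fall in $N$, but then $R_{x+N}=R_i|_N=0$, so $x(xm+mx)=0$ gives $(L_x|_N)^2=0$, and any nonzero vector in $\ker(L_x|_N)$ is centralised by $x$, completing the argument.
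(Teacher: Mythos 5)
Your proof is correct, but for the harder direction (zero eigenvalue $\Rightarrow$ some centraliser escapes $N$) it takes a genuinely different route from the paper; the easy direction is the same in both. When $nx=0$ and $xn\neq 0$, the paper simply asserts $x\in C_L(xn)\setminus N$ ``as in Lemma \ref{s}''. That computation gives $x(xn)=0$ unconditionally, but $(xn)x=[x^2,n]$, and in Lemma \ref{s} this vanished only because of the explicit hypothesis $x^2\in A$; in the present lemma one knows only $x^2\in I$, and if $I\not\subseteq N$ then $[x^2,n]$ can be nonzero, so the element $xn$ need not be centralised by $x$ on the right. Your argument is built precisely to avoid relying on $x^2\in N$: you push $xn$ into $I\cap N$, prove $(R_x|_N)^k(xn)=[y_k,n]$ with $y_k\in I$ for $k\geq 1$, and then either extract a nonzero kernel vector of $R_x$ inside the cyclic subspace $C$ (two-sidedly centralised by $x$ because $C\subseteq I$), or write $xn=[i,n]$ with $i\in I$ and centralise $n$ by the corrected element $x-i$. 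This buys robustness --- it proves the lemma exactly as stated, for an arbitrary zero ideal $N$ --- at the cost of considerable length; the paper's one-line reduction buys brevity but implicitly uses $x^2n=0$, which is automatic only when, e.g., $I\subseteq N$ (as it is in the situations where the lemma is invoked, $N$ being the nilradical). Two small remarks: the final ``bookkeeping'' step is redundant, since $x-i\in N$ would force $R_x|_N=0$ and hence contradict the assumed invertibility of $R_x$ on the nonzero subspace $C$; and your reading of ``for all $n\in N$'' as ``for all $0\neq n\in N$'' is the right one and matches the usage in Proposition \ref{factor}.
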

\begin{proof} Suppose $R_{x+N}$ has a zero eigenvalue under the action, where $x\notin N$. Then, there exists $0\neq n\in N$ such that $nx=0$. If $xn=0$, then $x\in C_L(n)\setminus N$. If $xn\neq 0$, then $x\in C_L(xn)\setminus N$, as in Lemma \ref{s} above.
\par

Conversely, if $x\in C_L(n)\setminus N$ for some $n\in N$, we have $nx=0$ and $R_{x+N}$ has a zero eigenvalue.
\end{proof}

\begin{propo}\label{factor} Let $L$ be a Leibniz algebra and $N$ be a zero ideal of $L$ such that $L/N$ is a $CT$-algebra and $C_L(n)\subseteq N$ for all $0\neq n\in N$. Then $L$ is a $CT$-algebra. 
\end{propo}
\begin{proof} Let $0\neq x\in L$ and take $y,z\in C_L(x)$, so $xy=yx=xz=zx=0$. If $y\in N$, then $x\in C_L(y)\subseteq N$. Hence $y,z\in C_L(x)\subseteq N$, giving $yz=0$.
\par

So assume that $y\notin N$. Then $yz\in N$, by the hypothesis. Now
\[ x(yz)=(xy)z-(xz)y=0 \hbox{ and } (yz)x=y(zx)+(yx)z=0,
\] so $x\in C_L(yz)\subseteq N$. Thus $C_L(x)\subseteq N$ and $yz=0$. The result follows.
\end{proof}
\medskip

{\bf Note} that If $L$ is a nilpotent cyclic Leibniz algebra of dimension greater than $1$, then $L/I$ is a Lie $CT$ algebra, but $L$ is not a $CT$-algebra. Next we have the two main classification results.

\begin{theor}\label{main} Let $L$ be a Leibniz $CT$-algebra over an algebraically closed field $F$ of characteristic $\neq 2,3$. Then $L$ is solvable or is isomorphic to $sl_2(F)$.
\end{theor}
\begin{proof} Since $F$ has cohomological dimension $0$, $L=R\dot{+}S$, where $R$ is the radical of $L$ and $S$ is a direct sum of ideals isomorphic to $sl_2(F)$, by Lemma \ref{A} and \cite[Theorem 2]{tow}. Suppose that $R\neq 0$. If $S\neq 0$, it is clear from Lemma \ref{1} that $S\cong sl_2(F)$, and there is an element $x\in S$ such that $R_x$ is nilpotent. Then $R_x$ has a zero eigenvalue and $A=N\dot{+} Fx$, where $N$ is the nilradical of $L$, must be a zero algebra, by Lemma \ref{s}. Thus, $x\in C_L(N)$. But $C_L(N)$ is an ideal of $L$, and so $S\subseteq C_L(N)$. Pick $0\neq y\in N$. Then $S\subset C(y)$, which is a zero algebra. Hence $S=0$.
\end{proof}

\begin{theor}\label{solv} Let $L$ be a solvable Leibniz $CT$-algebra of derived length $n+1$. Then 
\begin{itemize}
\item[(i)] $L=A_n\dot{+} A_{n-1}\dot{+} \ldots \dot{+} A_0$, where $A_i$ is an a zero subalgebra of $L$ and $L^{(i)}=A_n\dot{+} A_{n-1}\dot{+} \ldots \dot{+} A_i$ for $0\leq i\leq n$;
\item[(ii)] $L$ splits over the nilradical $N$, which equals $L^{(n)}$; and
\item[(iii)] for every element $x\in L\setminus N$, $R_x|_N$ has no zero eigenvalue in $F$, and $Nx=N$.
\end{itemize}
\end{theor}
\begin{proof} (i) We have that $L=A_n\dot{+} A_{n-1}\dot{+} \ldots \dot{+} A_0$ and $L^{(i)}=A_n\dot{+} A_{n-1}\dot{+} \ldots \dot{+} A_i$, by \cite[Corollary 1]{tow}. 
\par

\noindent (ii) Also, $N=A_n\oplus (N\cap A_{n-1}\oplus \ldots \oplus N\cap A_0)$ and $Z(L^{(i)})=N\cap A_i$ for each $0\leq i\leq n$, by \cite[Theorem 5]{tow}. 
Suppose that $N\cap A_i\neq 0$ for some $0\leq i\leq n-1$. Let $0\neq x\in N\cap A_i$. Then $C_L(x)\supseteq L^{(i)}$. But $L^{(i)}$ is not a zero algebra for $0\leq i\leq n-1$, so we have a contradiction.
\par

\noindent (iii) Let $x\in L\setminus N$. Then $L(x)=N\dot{+}Fx$ satisfies the hypothesis of Lemma \ref{s}, since $x^2\in I\subseteq N$. If $L(x)$ is a zero algebra, then $x\in C_L(N)=N$, by \cite[Lemma 7]{tow}, a contradiction. Hence  $R_x|_N$ has no zero eigenvalue in $F$, and $Nx=N$, by Lemma \ref{s}.
\end{proof}

\begin{definition} A Leibniz algebra $L$ is called {\bf completely solvable} if $L^2$ is nilpotent. 
\end{definition}

Over a field of characteristic zero, every solvable Leibniz algebra is completely solvable, and so every solvable Leibniz $CT$-algebra has derived length at most $2$. However, this is not the case over fields of positive characteristic, even for Lie algebras, as the following example, which is taken from \cite[pages 52, 53]{jac}, shows. 

\begin{ex}\label{jac}
Let
$$ e = \left[ \begin{array}{rrrrrrr}
0 & 1 & 0 & . & . & . & 0\\
0 & 0 & 1 & 0 & . & . & 0\\
\vdots &  &  &  &  &  & \vdots \\
0 & . & . & . & . & 0 & 1 \\
1 & 0 & . & . & . & . & 0
\end{array} \right], \hspace{.2cm}
f = \left[ \begin{array}{rrrrr}
0 & 0 & 0 & \ldots & 0 \\
0 & 1 & 0 & \ldots & 0 \\
0 & 0 & 2 & \ldots & 0 \\
\vdots &  &  &  & \vdots \\
0 & 0 & 0 & \ldots & p-1
\end{array} \right],
$$ 
let $F$ be a field of prime characteristic $p$ and put $L = Fe + Ff + F^p$ with product $[a + {\bf x}, b + {\bf y}] = [a,b] + ({\bf x}b - {\bf y}a)$ for all $a, b \in Fe + Ff$, ${\bf x}, {\bf y} \in F^p$. 
\end{ex}

Then, straightforward calculations show that
\[   C(\alpha e+\beta f+{\bf x})=
\left\{ \begin{array}{lll}
Ff+F{\bf x_1} & \hbox{if} & \alpha=\beta=0, {\bf x}={\bf x_1} \\ 
 F^p & \hbox{if} &  \alpha=\beta=0, {\bf x}\neq {\bf x_1} \\
F(\alpha e+\beta f+{\bf x}) &  \hbox{if} & \alpha\neq 0 \\
F(\beta f+{\bf x})+F{\bf x_1} &  \hbox{if} & \alpha =0, \beta \neq 0,
\end{array} \right\}
\] where ${\bf x_1}=(1,0, \ldots, 0)$, and all of these are zero algebras.

\begin{theor}\label{solv1} Let $L$ be a solvable Leibniz $CT$-algebra over an arbitrary algebraically closed field $F$. Then the nilradical of $L$ has codimension at most $2$ in $L$.
\end{theor}
\begin{proof} We have that $L=N\dot{+}A_1\dot{+} A_0$, by Theorem \ref{solv} and \cite[Theorem 14]{tow}. Suppose $A_1\neq 0$. Let $A$ be a minimal ideal of $B=N\dot{+}A_1$, inside $N$. Then $A$ is an irreducible $B$-bimodule, and so $A_1A=0$ or $ab=-ba$ for every $a\in A$, $b\in B$, by \cite[Lemma 1.9]{barnes}. In either case, $A$ is a minimal right $A_1$-module and $A=Fn$ is one-dimensional, by \cite[Lemma 5]{sf}. But $C_B(A)\subseteq N$ has dimension at least $\dim B-1$, so $\dim A_1\leq1$. The same argument shows that $\dim A_0\leq 1$, whence the result.
\end{proof}
\medskip

If the field $F$ in the above result has characteristic zero, then the codimension is at most one. However, over any field of characteristic $p>0$, the codimension can be two, even if $L$ is a Lie algebra, as Example \ref{jac} shows.
\par

If $L$ is a Leibniz algebra and $y\in L$, the {\it left centraliser of $y$ in $L$}, is $C_L^{\ell}(y)=\{x\in L \mid xy=0\}$. It is easy to check that this a subalgebra of $L$.

\begin{theor}\label{solv2} Let $L$ be a completely solvable Leibniz $CT$-algebra. Then, either $L$ is a zero algebra, or $L=N\dot{+}A_0$ where $N$ is the nilradical, $N^2=A_0^2=0$, $R_x|_N$ has no zero eigenvalue and $Nx=N$ for all $x\in A_0$. If $A_0$ and $A_0'$ are two complements to $N$ in $L$, then there exists $n\in N$ such that $(1+L_n)(A_0)=A_0'$.
\end{theor}
\begin{proof} We have that $L=N\dot{+}A_0$  where $N$ is the nilradical and $N^2=A_0^2=0$, by Theorem \ref{solv}. Suppose that $L$ is not a zero algebra. Then $A_0\neq 0$, $R_x|_N$ has no zero eigenvalue and $Nx=N$ for all $x\in A_0$, by Lemma \ref{s}.
\par

For every $y\in L\setminus N$ we have $N=Ny\subseteq Ly\subseteq L^2\subseteq N$, so $N=Ly$. Pick any $x\in L$. Then $xy\in N=Ny$, so there is an $n\in N$ such that $xy=ny$. Thus $(x-n)y=0$ and $x-n\in C_L^{\ell}(y)$. It follows that $L=N+C_L^{\ell}(y)$. Moreover, $N\cap C_L^{\ell}(y)=0$, since $y$ has no zero eigenvalue on $N$, so $C_L^{\ell}(y)$ is a complement to $N$ in $L$. 
\par

Let $A_0$ be any complement to $N$ in $L$. Then $y=n'+a$ for some $n'\in N=Ny$ and $a\in A_0$. Hence, there is an $n\in N$ such that $n'=-ny$ and $a=(1+L_n)(y)$. Put $\theta=1+L_n$, so $\theta(y)=a$ and $\theta(C_L^{\ell}(y))\subseteq C_L^{\ell}(a)$. But $A_0=C_L^{\ell}(a)$ from which the final claim follows.
\end{proof}
\medskip

{\bf Note} that the above result mirrors the corresponding result in Lie algebras (see \cite[Theorem 3]{km}). However, in that result, the complements are conjugate under the inner automorphism $1+ad\, n$. In our result, $1+L_a$ is not an automorphism, in general, as the following example shows.

\begin{ex}\label{2} Let $L$ be the two-dimensional solvable cyclic Leibniz algebra with basis $a,a^2$ and $a^2a=a^2$. It is easy to see that this is a $CT$-algebra. Put $\theta=1+L_{a^2}$. Then $\theta(a^2)=a^2$, whereas $\theta(a)\theta(a)=(a+a^2)(a+a^2)=a^2+a^2$.
\end{ex}

It would have been better if we could have taken $\theta=1+R_n$, which is an automorphism. However, if $N=I$ (as in Example \ref{2} above), $\theta(A_0)=A_0$. 
\par

Finally, we have that solvable Leibniz $CT$-algebras are factor-closed.

\begin{theor}\label{fac} Let $L$ be a solvable Leibniz $CT$-algebra and let $J$ be an ideal of $L$. Then $L/J$ is a $CT$-algebra.
\end{theor}
\begin{proof} Suppose that $J\not \subseteq N$ and let $x\in J\setminus N$. Then $N=Nx\subseteq J$, by Lemma \ref{s}, so $J\subset N$ or $N\subseteq J$. We use induction on the derived length $k$ of $L$. Suppose first that $k=2$, so $L$ is completely solvable. If $N\subseteq J$, we have that $L/N$ is a zero algebra and so $L/J$ is a $CT$-algebra.
\par

 So suppose now that $J\subset N$. A similar argument to that used in Lemma \ref{s} also shows that $J=Jx$ for all $x\notin N$. Let $x+J\in C_{L/J}(n+J)$ where $n\in N\setminus J$ and suppose that $x\notin N$. Then $nx\in J=Jx$, so $nx=jx$ for some $j\in J$. Thus $(n-j)x=0$. But now $n=j$, since $L$ is a $CT$-algebra, and this is a contradiction. It follows that $C_{L/J}(n+J)\subseteq N/J$. Now $L/N\cong (L/J)/(N/J)$ is a $CT$-algebra and hence so is $L/J$, by Proposition \ref{factor}.
\par

So suppose the result holds whenever $k\leq m$ and let $L$ have derived length $m+1$. Then $L=N\dot{+}B$ for some subalgebra $B$ of derived length $m$ of $L$, by Theorem \ref{solv} (ii). Now $L/N\cong B$ is a $CT$-algebra, by the inductive hypothesis. If $N\subseteq J$, then $L/J\cong (L/N)/(J/N)\cong B/B\cap J$, which is a $CT$-algebra, by the inductive hypothesis. If $J\subset N$, then $L/J$ is a $CT$-algebra as in paragraph two above. 
\end{proof}

\begin{coro} Let $L$ be any Leibniz $CT$-algebra over an algebraically closed field of characteristic $\neq 2,3$, and let $J$ be an ideal of $L$. Then $L/J$ is a $CT$-algebra.
\end{coro}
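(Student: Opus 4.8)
The plan is to reduce immediately to the dichotomy supplied by Theorem \ref{main}: since $F$ is algebraically closed of characteristic $\neq 2,3$, the Leibniz $CT$-algebra $L$ is either solvable or isomorphic to $sl_2(F)$. These two cases can then be dispatched separately, so the argument is essentially a case split followed by an appeal to the results already established.

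First, if $L$ is solvable then there is nothing left to do: Theorem \ref{fac} already guarantees that $L/J$ is a $CT$-algebra for every ideal $J$ of $L$, so this case is immediate.

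It remains to treat $L\cong sl_2(F)$. Here I would observe that, because $L$ satisfies $[x,x]=0$, its Leibniz ideals coincide with its Lie ideals; and over an algebraically closed field of characteristic $\neq 2$ (in particular $\neq 2,3$) the Lie algebra $sl_2(F)$ is simple, since its only candidate for a proper nonzero ideal, the centre, vanishes once $2$ is invertible. Hence the only ideals $J$ are $0$ and $L$. If $J=0$ then $L/J\cong L$ is a $CT$-algebra by hypothesis, while if $J=L$ then $L/J=0$ is vacuously a $CT$-algebra by Lemma \ref{1}, there being no nonzero elements to test. In either subcase $L/J$ is a $CT$-algebra.

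The only point requiring any care is the simplicity of $sl_2(F)$ in the stated characteristic, which is exactly what forces the ideals of the non-solvable piece to be trivial and thereby removes the one situation — a proper nonzero quotient of $sl_2(F)$ — in which factor-closedness could conceivably fail. Everything else is a direct appeal to Theorems \ref{main} and \ref{fac}, so I do not anticipate a genuine obstacle beyond recording this simplicity observation.
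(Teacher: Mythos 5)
Your proposal is correct and follows essentially the same route as the paper, which simply declares the result immediate from Theorems \ref{main} and \ref{fac}; your only addition is to spell out the simplicity of $sl_2(F)$ (so that the non-solvable case has no proper nonzero quotients), a detail the paper leaves implicit.
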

\begin{proof} This is immediate from Theorems \ref{main} and \ref {fac}.
\end{proof}

\end{document}